\documentclass[11pt,oneside,leqno]{amsart}

\makeatletter
\@namedef{subjclassname@2020}{\textup{2020} Mathematics Subject Classification}
\makeatother

\usepackage[top=2.5 cm,bottom=2 cm,left=2 cm,right=3 cm]{geometry}
\usepackage[utf8]{inputenc}
\usepackage{color}
\usepackage{amssymb}

\usepackage[english]{babel}

\usepackage{todonotes}
\usepackage[backref=page]{hyperref}

\usepackage{esint}
\usepackage{graphicx}
\usepackage{hyperref}
\usepackage{bm}
\usepackage{xcolor}

\newcommand*{\mailto}[1]{\href{mailto:#1}{\nolinkurl{#1}}}

\usepackage{todonotes}

\numberwithin{equation}{section}

\newtheorem{example}{Example}[section]

\newtheorem{theorem}[example]{Theorem}
 
\newtheorem{lemma}[example]{Lemma}
 \newtheorem{corollary}[example]{Corollary}

\newtheorem*{maintheorem*}{Main Theorem}
\allowdisplaybreaks
\numberwithin{equation}{section}

\renewcommand{\i}{\ifmmode\mathit{\mathchar"7010 }\else\char"10 \fi}
\renewcommand{\j}{\ifmmode\mathit{\mathchar"7011 }\else\char"11 \fi}


{%

\begin{enumerate}}%
{\end{enumerate}}

%
{%

\begin{enumerate}}%
{\end{enumerate}}

\begin{document}

\title[Embedding $H^s$ into $C^{0,s-\frac{1}{2}}$]
{H\"older continuity of functions in the fractional Sobolev spaces: 1-dimensional case}

\author[Rybalko]{Yan Rybalko}

\address[Yan Rybalko]{\newline 
	Mathematical Division, 
	\newline B.Verkin Institute for Low Temperature Physics and Engineering
	of the National Academy of Sciences of Ukraine,
	\newline 47 Nauky Ave., Kharkiv, 61103, Ukraine}
\email[]{rybalkoyan@gmail.com}

\subjclass[2020]{Primary: 46E35, 26A16}

\keywords{Sobolev embedding theorem, H\"older continuous functions,  Morrey's inequality, fractional Sobolev spaces}


\date{\today}

\begin{abstract}
	This paper deals with the embedding of the Sobolev spaces of fractional order into the space of H\"older continuous functions.
	More precisely, we show that the function $f\in H^s(\mathbb{R})$ with 
	$\frac{1}{2}<s<1$ is H\"older continuous with the exponent $s-\frac{1}{2}$.
	This is a particular case of the much stronger embedding theorems (see Section 2.8.1 in \textit{H. Triebel, Interpolation Theory, Function Spaces, Differential Operators, North-Holland Pub. Co., Amsterdam, 1978.}), but here we give an elementary proof for $H^{s}(\mathbb{R})$.
\end{abstract}

\maketitle

\tableofcontents

\section{Introduction}
The Sobolev embedding theorem (see, e.g., \cite[equation (A.13)]{T06})
$$
\|f\|_{L^{\infty}(\mathbb{R}^n)}\lesssim_{p,s,n}
\|f\|_{W^{s,p}(\mathbb{R}^n)},
$$
which holds for $1<p<\infty$ and $s>0$ such that 
$\frac{1}{p}<\frac{s}{d}$, is all important in the theory of PDEs.
In particular, for $n=1$ and $p=2$ it reads as follows:
\begin{equation}\label{Sob-emb}
\|f\|_{L^{\infty}(\mathbb{R})}\lesssim_s
\|f\|_{H^s(\mathbb{R})},\quad s>\frac{1}{2}.
\end{equation}
The counterpart of the Sobolev inequality is the Morrey's inequality, which has the following form (see, e.g., \cite[Exercise A.22]{T06})
$$
\|f\|_{C^{0,\gamma}(\mathbb{R}^n)}\lesssim_{p,n}
\|f\|_{W^{1,p}(\mathbb{R}^n)},\quad \gamma=1-\frac{n}{p},
$$
where $n<p\leq\infty$, $\gamma=1-\frac{n}{p}$ and
\begin{equation}\label{Hold-norm}
\|f\|_{C^{0,\gamma}(\mathbb{R}^n)}=
\|f\|_{L^\infty(\mathbb{R}^n)}
+\sup\limits_{\xi_1\neq\xi_2}\left\{
\frac{|f(\xi_1)-f(\xi_2)|}{|\xi_1-\xi_2|^\gamma}
\right\},
\end{equation}
is the $\gamma$-th H\"older norm. In the case $n=1$ and $p=2$, the Morrey's inequality reduces to
\begin{equation}\label{Morrey}
\|f\|_{C^{0,1/2}(\mathbb{R})}\lesssim
\|f\|_{H^1(\mathbb{R})}.
\end{equation}

Combining \eqref{Sob-emb} and \eqref{Morrey} one concludes that any function $f\in H^s(\mathbb{R})$ with 
$\frac{1}{2}<s<1$ is continuous, while at the endpoint case, namely $s=1$, the function $f$ is H\"older continuous with the exponent $\frac{1}{2}$.
In the present work we establish that the function $f\in H^s(\mathbb{R})$ with 
$\frac{1}{2}<s<1$ is H\"older continuous with exponent $s-\frac{1}{2}$.
In particular, we have (see Corollary \ref{cor-1})
\begin{equation}\label{Hold-Sob}
\|f\|_{C^{0,s-1/2}(\mathbb{R})}\lesssim_s
\|f\|_{H^s(\mathbb{R})},\quad \frac{1}{2}<s<1,
\end{equation}
which is an improvement of the Sobolev embedding theorem \eqref{Sob-emb} in the one dimensional case.

The article is organized as follows.
In Section \ref{not} we introduce notations and some basic facts used in the paper.
In Section \ref{Emb} we first prove two auxiliary lemmas and then Theorem \ref{Hold-Sob-th}, where the main result is formulated.

\section{Notations}\label{not}
We write the Fourier transform of the function $f(\xi)$ as follows:
$$
\mathcal{F}(f)(k)=
\frac{1}{2\pi}\int_{\mathbb{R}}
e^{-\mathrm{i}k\xi}f(\xi)
\,d\xi,\quad k\in\mathbb{R},
$$
with the inverse relation given by
$$
\mathcal{F}^{-1}(F)(\xi)=
\int_{\mathbb{R}}
e^{\mathrm{i}k\xi}F(k)\,dk,\quad
\xi\in\mathbb{R}.
$$
In these notations the Plancherel formula has the form
\begin{equation}\label{Planch}
(f,g)_{L^2(\mathbb{R})}=2\pi(\mathcal{F}(f),\mathcal{F}(g))
_{L^2(\mathbb{R})}.
\end{equation}
The norm of the function $f(\xi)$ in the fractional Sobolev 
space $H^s(\mathbb{R})$, $s\in\mathbb{R}$ \cite{A55, G58, S58} is defined by
$$
\|f\|_{H^s(\mathbb{R})}^2=
2\pi\int_\mathbb{R}(1+|k|^2)^s|\mathcal{F}(f)(k)|^2\,dk.
$$
Finally, we denote the Schwartz space by $\mathcal{S}(\mathbb{R})$.

\section{Embedding $H^s$ into $C^{0,s-\frac{1}{2}}$}
\label{Emb}
At first, we prove two lemmas, which will be then applied in Theorem \ref{Hold-Sob-th}.
The first lemma is as follows:
\begin{lemma}\label{L-E-1-Lip}
	Consider the function
	\begin{equation}\label{E-1}
	E_1(\xi;\xi_1,\xi_2)=
	\begin{cases}
	e^{\xi-\xi_1}-e^{\xi-\xi_2},&\xi\leq\xi_1,\\
	0,&\mbox{otherwise},
	\end{cases}
	\end{equation}
	with the parameters
	$\xi_1,\xi_2\in\mathbb{R}$, $\xi_1\leq\xi_2$.
	Then the following estimate holds:
	\begin{equation}\label{E-1-Lip}
	\left(\int_\mathbb{R}|k|^{2\sigma}
	|\mathcal{F}(E_1((\cdot);\xi_1,\xi_2))(k)|^2
	\,dk\right)^{\frac{1}{2}}
	\lesssim_\sigma(\xi_2-\xi_1),
	\end{equation}
	for any $0<\sigma<\frac{1}{2}$.
\end{lemma}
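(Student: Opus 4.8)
The plan is to compute the Fourier transform of $E_1$ explicitly in closed form. Since $E_1$ is supported on $(-\infty,\xi_1]$ and there is controlled by $e^{\xi-\xi_1}$, the defining integral converges absolutely, so I would write
$$
\mathcal{F}(E_1((\cdot);\xi_1,\xi_2))(k)=\frac{1}{2\pi}\int_{-\infty}^{\xi_1}e^{-\mathrm{i}k\xi}\bigl(e^{\xi-\xi_1}-e^{\xi-\xi_2}\bigr)\,d\xi
$$
and integrate each exponential term separately. Because $\mathrm{Re}(1-\mathrm{i}k)=1>0$, each primitive $e^{(1-\mathrm{i}k)\xi}/(1-\mathrm{i}k)$ vanishes as $\xi\to-\infty$, and evaluating at the upper limit $\xi_1$ gives the compact expression
$$
\mathcal{F}(E_1((\cdot);\xi_1,\xi_2))(k)=\frac{1}{2\pi}\,\frac{e^{-\mathrm{i}k\xi_1}}{1-\mathrm{i}k}\bigl(1-e^{\xi_1-\xi_2}\bigr).
$$

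Taking the modulus separates the frequency dependence from the parameter dependence,
$$
\bigl|\mathcal{F}(E_1((\cdot);\xi_1,\xi_2))(k)\bigr|=\frac{1}{2\pi}\,\frac{1-e^{-(\xi_2-\xi_1)}}{\sqrt{1+k^2}},
$$
and I would then bound the parameter factor by the elementary inequality $1-e^{-t}\le t$ for $t\ge0$, applied with $t=\xi_2-\xi_1\ge0$, so that $1-e^{-(\xi_2-\xi_1)}\le\xi_2-\xi_1$. Substituting this into the left-hand side of \eqref{E-1-Lip} and pulling the factor $(\xi_2-\xi_1)^2$ out of the integral reduces the claim to showing that
$$
\int_\mathbb{R}\frac{|k|^{2\sigma}}{1+k^2}\,dk<\infty.
$$

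The only substantive point is the finiteness of this last integral, and this is precisely where the restriction $0<\sigma<\frac{1}{2}$ is used: near $k=0$ the integrand behaves like $|k|^{2\sigma}$, which is integrable because $2\sigma>-1$, while as $|k|\to\infty$ it behaves like $|k|^{2\sigma-2}$, which is integrable exactly when $2\sigma-2<-1$, i.e. $\sigma<\frac{1}{2}$. Calling the resulting finite constant $C_\sigma$, I obtain
$$
\left(\int_\mathbb{R}|k|^{2\sigma}\bigl|\mathcal{F}(E_1((\cdot);\xi_1,\xi_2))(k)\bigr|^2\,dk\right)^{\frac{1}{2}}\le\frac{\sqrt{C_\sigma}}{2\pi}\,(\xi_2-\xi_1),
$$
which is exactly \eqref{E-1-Lip}. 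I do not anticipate a serious obstacle: the argument is a direct computation, and the hypothesis on $\sigma$ is dictated entirely by the convergence of the scalar integral above.
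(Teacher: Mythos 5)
Your proof is correct, and it takes a genuinely different (and considerably shorter) route than the paper. The paper never computes $\mathcal{F}(E_1)$ at all: it invokes the identity $|k|^{2\sigma}=C(\sigma)\int_{\mathbb{R}}\frac{1-\cos kz}{|z|^{1+2\sigma}}\,dz$ together with the Plancherel formula to rewrite the weighted integral as the Gagliardo-type double integral $\frac{C(\sigma)}{4\pi}\int_{\mathbb{R}}\int_{\mathbb{R}}\frac{(E_1(\xi)-E_1(\eta))^2}{|\xi-\eta|^{1+2\sigma}}\,d\xi\,d\eta$, and then splits the plane into several regions (off-support cross terms, far diagonal, near diagonal) and estimates each piece separately using $|e^a-e^b|\le e^{\max\{a,b\}}|a-b|$. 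You instead exploit the fact that $E_1$ is a one-sided exponential profile, so its Fourier transform is available in closed form, $\mathcal{F}(E_1)(k)=\frac{1}{2\pi}\frac{e^{-\mathrm{i}k\xi_1}}{1-\mathrm{i}k}\bigl(1-e^{-(\xi_2-\xi_1)}\bigr)$; after the elementary bound $1-e^{-t}\le t$ the whole lemma collapses to the finiteness of $\int_{\mathbb{R}}\frac{|k|^{2\sigma}}{1+k^2}\,dk$, which is exactly where $0<\sigma<\frac{1}{2}$ enters. Your computation is consistent with the paper's Fourier conventions, yields an explicit constant, and makes the role of the hypothesis on $\sigma$ completely transparent (indeed it would even allow $-\frac{1}{2}<\sigma<\frac{1}{2}$). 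What the paper's heavier machinery buys is robustness: the physical-space double-integral method does not depend on having a closed-form transform, and it is the same template the paper reuses verbatim for $E_2$ in Lemma \ref{L-E-2-Hold}, where the support is an interval and the cancellation structure is different; your method would also work there (the transform of $E_2$ is likewise explicit), but the paper's single template covers both lemmas with one set of estimates.
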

\begin{proof}
	Throughout the proof we write $E_1(\xi)$ instead of $E_1(\xi;\xi_1,\xi_2)$ for simplicity.
	Taking into account that (see, e.g, \cite[equation (3.12)]{DNPV12})
	\begin{equation}\label{k2s}
	|k|^{2\sigma}=
	C(\sigma)\int_\mathbb{R}\frac{1-\cos kz}{|z|^{1+2\sigma}}\,dz,\
	\quad C(\sigma)=\left(\int_{\mathbb{R}}
	\frac{1-\cos\xi}{|\xi|^{1+2\sigma}}\,d\xi\right)^{-1},
	\end{equation}
	we obtain
	\begin{equation}
	\begin{split}
	\label{int-k2r}
	\int_\mathbb{R}|k|^{2\sigma}|\mathcal{F}(E_1)(k)|^2\,dk&=
	C(\sigma)\int_\mathbb{R}|\mathcal{F}(E_1)(k)|^2
	\int_\mathbb{R}\frac{1-\cos kz}{|z|^{1+2\sigma}}\,dz
	\,dk\\
	&=\frac{C(\sigma)}{2}\int_\mathbb{R}
	\frac{1}{|z|^{1+2\sigma}}
	\int_\mathbb{R}|e^{\mathrm{i}kz}-1|^2
	|\mathcal{F}(E_1)(k)|^2\,dk\,dz.
	\end{split}
	\end{equation}
	Then using that
	\begin{equation}
	\nonumber
	\int_\mathbb{R}|e^{\mathrm{i}kz}-1|^2
	|\mathcal{F}(E_1)(k)|^2\,dk=
	\|(e^{\mathrm{i}kz}-1)\mathcal{F}(E_1)(k)
	\|_{L^2}^2
	=\|\mathcal{F}(E_1(z+\cdot)-E_1(\cdot))(k)\|
	_{L^2}^2,
	\end{equation}
	and applying the Plancherel formula \eqref{Planch}, we have from \eqref{int-k2r}
	\begin{equation}
	\label{int-k2r-d}
	\begin{split}
	\int_\mathbb{R}|k|^{2\sigma}|
	\mathcal{F}(E_1)(k)|^2\,dk&=
	\frac{C(\sigma)}{4\pi}\int_\mathbb{R}\int_\mathbb{R}
	\frac{\left(E_1(z+\xi)-E_1(\xi)\right)^2}
	{|z|^{1+2\sigma}}\,d\xi\,dz\\
	&=\frac{C(\sigma)}{4\pi}
	\int_\mathbb{R}\int_\mathbb{R}
	\frac{\left(E_1(\xi)-E_1(\eta)\right)^2}
	{|\xi-\eta|^{1+2\sigma}}\,d\xi\,d\eta.
	\end{split}
	\end{equation}
	Recalling that $E_1(\xi)=0$ for $\xi>\xi_1$, the double integral in the right-hand side of \eqref{int-k2r-d} can be written in the form
	\begin{equation}
	\label{int-k2s-sum}
	\begin{split}
	\int_\mathbb{R}\int_\mathbb{R}
	\frac{\left(E_1(\xi)-E_1(\eta)\right)^2}
	{|\xi-\eta|^{1+2\sigma}}\,d\xi\,d\eta=&
	\int_{\xi_1}^{\infty}\int_{-\infty}^{\xi_1}
	\frac{E_1^2(\xi)}{|\xi-\eta|^{1+2\sigma}}
	\,d\xi\,d\eta
	+
	\int_{-\infty}^{\xi_1}\int_{\xi_1}^{\infty}
	\frac{E_1^2(\eta)}{|\xi-\eta|^{1+2\sigma}}
	\,d\xi\,d\eta\\
	&+
	\int_{-\infty}^{\xi_1}\int_{-\infty}^{\xi_1}
	\frac{\left(E_1(\xi)-E_1(\eta)\right)^2}
	{|\xi-\eta|^{1+2\sigma}}
	\,d\xi\,d\eta
	\equiv\sum\limits_{i=1}^3\tilde I_{i}.
	\end{split}
	\end{equation}
	For the integral $\tilde I_{2}$ we have
	\begin{equation}
	\nonumber
	\begin{split}
	\tilde I_{2}&=
	\left(e^{-\xi_1}-e^{-\xi_2}\right)^2
	\int_{-\infty}^{\xi_1}e^{2\eta}
	\int_{\xi_1}^{\infty}
	(\xi-\eta)^{-1-2\sigma}
	\,d\xi\,d\eta\\
	&\leq \frac{e^{-2\xi_1}}{2\sigma}
	(\xi_2-\xi_1)^2\int_{-\infty}^{\xi_1}e^{2\eta}
	(\xi_1-\eta)^{-2\sigma}\,d\eta
	\lesssim_\sigma(\xi_2-\xi_1)^2
	\int_{-\infty}^0e^{2\eta}|\eta|^{-2\sigma}
	\,d\eta
	\lesssim_\sigma (\xi_2-\xi_1)^2,
	\end{split}
	\end{equation}
	where in the last inequality we use that $0<\sigma<\frac{1}{2}$.
	The integral $\tilde I_{1}$, after changing the order of integration, can be estimated similarly.
	
	Now consider $\tilde I_{3}$.
	We write this integral in the following form:
	\begin{equation}
	\nonumber
	\left(
	\int_{-\infty}^{\xi_1}\int_{-\infty}^{\eta-1}
	+\int_{-\infty}^{\xi_1}\int_{\eta-1}^{\eta+1}
	+\int_{-\infty}^{\xi_1}\int_{\eta+1}^{\xi_1}
	\right)
	\frac{\left(E_1(\xi)-E_1(\eta)\right)^2}
	{|\xi-\eta|^{1+2\sigma}}
	\,d\xi\,d\eta
	\equiv\sum\limits_{i=1}^3\tilde{I}_{3,i}.
	\end{equation}
	Taking into account that 
	$E_1(\xi)-E_1(\eta)=
	(e^{-\xi_1}-e^{-\xi_2})(e^\xi-e^\eta)$ and $|e^a-e^b|\leq
	e^{\max\{a,b\}}|a-b|$, we have for $\tilde I_{3,1}$
	\begin{equation}\label{tilde-I-3-1}
	\begin{split}
	\tilde I_{3,1}&=\left(
	e^{-\xi_1}-e^{-\xi_2}\right)^2
	\int_{-\infty}^{\xi_1}\int_{-\infty}^{\eta-1}
	\frac{(e^{\xi}-e^{\eta})^2}
	{(\eta-\xi)^{1+2\sigma}}\,d\xi\,d\eta\\
	&\leq e^{-2\xi_1}(\xi_2-\xi_1)^2
	\left(
	\int_{-\infty}^{\xi_1}\int_{-\infty}^{\eta-1}
	\frac{e^{2\eta}}
	{(\eta-\xi)^{1+2\sigma}}\,d\xi\,d\eta
	+
	\int_{-\infty}^{\xi_1}\int_{-\infty}^{\eta-1}
	\frac{e^{2\xi}}
	{(\eta-\xi)^{1+2\sigma}}\,d\xi\,d\eta
	\right).
	\end{split}
	\end{equation}
	The integrals in the right-hand side of \eqref{tilde-I-3-1} can be calculated as follows
	$$
	\int_{-\infty}^{\xi_1}\int_{-\infty}^{\eta-1}
	\frac{e^{2\eta}}
	{(\eta-\xi)^{1+2\sigma}}\,d\xi\,d\eta
	=\frac{1}{2\sigma}\int_{-\infty}^{\xi_1}
	e^{2\eta}\,d\eta=\frac{1}{4\sigma}e^{2\xi_1},
	$$
	and 
	\begin{equation}
	\nonumber
	\begin{split}
	\int_{-\infty}^{\xi_1}\int_{-\infty}^{\eta-1}
	\frac{e^{2\xi}}
	{(\eta-\xi)^{1+2\sigma}}\,d\xi\,d\eta
	&=\int_{-\infty}^{\xi_1-1}e^{2\xi}
	\int_{\xi+1}^{\xi_1}
	(\eta-\xi)^{-1-2\sigma}
	\,d\eta\,d\xi\\
	&=
	\frac{e^{2\xi_1}}{2\sigma}
	\int_{-\infty}^{\xi_1-1}e^{2(\xi-\xi_1)}
	\left(1-(\xi_1-\xi)^{-2\sigma}\right)\,d\xi
	\lesssim_\sigma e^{2\xi_1},
	\end{split}
	\end{equation}
	which imply that $\tilde I_{3,1}\lesssim_\sigma (\xi_2-\xi_1)^2$.
	To deal with the integral $\tilde I_{3,3}$ one argue similarly as for $\tilde{I}_{3,1}$ and use, in addition, that
	$0<\sigma<\frac{1}{2}$ and
	$$
	\int_{-\infty}^{\xi_1}\int_{\eta+1}^{\xi_1}
	\frac{e^{2\xi}}{(\xi-\eta)^{1+2\sigma}}\,d\xi\,d\eta
	\leq
	\int_{-\infty}^{\xi_1-1}\int_{\eta+1}^{\xi_1}
	\frac{e^{2\xi}}{(\xi-\eta)^{1+2\sigma}}\,d\xi\,d\eta
	=\int_{-\infty}^{\xi_1}\int_{-\infty}^{\xi-1}
	\frac{e^{2\xi}}{(\xi-\eta)^{1+2\sigma}}
	\,d\eta\,d\xi.
	$$

	Finally, let us estimate the integral 
	$\tilde I_{3,2}$.
	Taking into account that $|e^\xi-e^\eta|\leq
	e^{\max\{\xi,\eta\}}|\xi-\eta|$, we have
	\begin{equation}
	\nonumber
	\tilde I_{3,2}
	\leq e^{-2\xi_1}(\xi_2-\xi_1)^2
	\left(
	\int_{-\infty}^{\xi_1}\int_{\eta-1}^{\eta}
	e^{2\eta}
	(\eta-\xi)^{1-2\sigma}\,d\xi\,d\eta
	+
	\int_{-\infty}^{\xi_1}\int_{\eta}^{\eta+1}
	e^{2\xi}
	(\xi-\eta)^{1-2\sigma}\,d\xi\,d\eta
	\right).
	\end{equation}
	Using that $2-2\sigma>0$, the above integrals can be estimated as follows:
	$$
	\int_{-\infty}^{\xi_1}\int_{\eta-1}^{\eta}
	e^{2\eta}
	(\eta-\xi)^{1-2\sigma}\,d\xi\,d\eta
	=\frac{1}{2-2\sigma}\int_{-\infty}^{\xi_1}
	e^{2\eta}\,d\eta\lesssim_\sigma e^{2\xi_1},
	$$
	and
	$$
	\int_{-\infty}^{\xi_1}\int_{\eta}^{\eta+1}
	e^{2\xi}
	(\xi-\eta)^{1-2\sigma}\,d\xi\,d\eta
	\leq
	\int_{-\infty}^{\xi_1+1}\int_{\xi-1}^{\xi}
	e^{2\xi}
	(\xi-\eta)^{1-2\sigma}\,d\eta\,d\xi
	\lesssim_\sigma e^{2\xi_1},
	$$
	Thus eventually we have that $\tilde I_{i}\lesssim_\sigma(\xi_2-\xi_1)^2$, $i=1,2,3$, which, together with \eqref{int-k2r-d} and \eqref{int-k2s-sum}, imply \eqref{E-1-Lip}.
\end{proof}

The second lemma reads
\begin{lemma}\label{L-E-2-Hold}
	Consider the function
	\begin{equation}\label{E-2}
	E_2(\xi;\xi_1,\xi_2)=
	\begin{cases}
	e^{\xi-\xi_2},&
	\xi_1\leq\xi\leq\xi_2,\\
	0,&\mbox{otherwise},
	\end{cases},
	\end{equation}
	with the parameters
	$\xi_1,\xi_2\in\mathbb{R}$ such that $0\leq\xi_2-\xi_2\leq1$.
	Then the following estimate holds:
	\begin{equation}\label{E-2-Hold}
	\left(\int_\mathbb{R}|k|^{2\sigma}
	|\mathcal{F}(E_2((\cdot);\xi_1,\xi_2))(k)|^2
	\,dk\right)^{\frac{1}{2}}
	\lesssim_\sigma(\xi_2-\xi_1)
	^{\frac{1}{2}-\sigma},
	\end{equation}
	for any $0<\sigma<\frac{1}{2}$.
\end{lemma}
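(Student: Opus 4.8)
The plan is to mirror the reduction already carried out in Lemma~\ref{L-E-1-Lip}. First I would invoke the representation \eqref{k2s} of $|k|^{2\sigma}$ together with the Plancherel formula \eqref{Planch} exactly as in the passage leading to \eqref{int-k2r-d}, so that the square of the left-hand side of \eqref{E-2-Hold} is recast as a constant multiple of the Gagliardo-type double integral
\begin{equation*}
\mathcal{G}:=\int_\mathbb{R}\int_\mathbb{R}
\frac{\left(E_2(\xi)-E_2(\eta)\right)^2}{|\xi-\eta|^{1+2\sigma}}\,d\xi\,d\eta.
\end{equation*}
Since $E_2$ is supported on $[\xi_1,\xi_2]$, the region where both variables lie outside the interval contributes nothing, and I would split $\mathcal{G}=J_1+2J_2$, where $J_1$ is the integral over $\xi,\eta\in[\xi_1,\xi_2]$ and $J_2$ is the integral over $\xi\in[\xi_1,\xi_2]$, $\eta\notin[\xi_1,\xi_2]$, the remaining cross term being equal to $J_2$ by the symmetry $\xi\leftrightarrow\eta$.

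For the interior piece $J_1$ I would use $E_2(\xi)-E_2(\eta)=e^{\xi-\xi_2}-e^{\eta-\xi_2}$ together with $|e^a-e^b|\leq e^{\max\{a,b\}}|a-b|$ and the elementary bound $e^{\xi-\xi_2}\leq 1$ valid on $[\xi_1,\xi_2]$, which gives $|E_2(\xi)-E_2(\eta)|\leq|\xi-\eta|$ and hence
\begin{equation*}
J_1\leq\int_{\xi_1}^{\xi_2}\int_{\xi_1}^{\xi_2}|\xi-\eta|^{1-2\sigma}\,d\xi\,d\eta
\lesssim_\sigma(\xi_2-\xi_1)^{3-2\sigma}\leq(\xi_2-\xi_1)^{1-2\sigma},
\end{equation*}
the last inequality following from $0\leq\xi_2-\xi_1\leq1$ and $3-2\sigma\geq1-2\sigma$. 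Thus $J_1$ turns out to be subdominant.

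The decisive contribution is $J_2$. Here the $\eta$-integration is over the complement of $[\xi_1,\xi_2]$ and can be computed explicitly: for fixed $\xi\in[\xi_1,\xi_2]$,
\begin{equation*}
\int_{-\infty}^{\xi_1}\frac{d\eta}{(\xi-\eta)^{1+2\sigma}}
+\int_{\xi_2}^{\infty}\frac{d\eta}{(\eta-\xi)^{1+2\sigma}}
=\frac{1}{2\sigma}\left[(\xi-\xi_1)^{-2\sigma}+(\xi_2-\xi)^{-2\sigma}\right].
\end{equation*}
Bounding $E_2^2(\xi)=e^{2(\xi-\xi_2)}\leq1$ and integrating in $\xi$, and using $0<\sigma<\frac{1}{2}$ to ensure the endpoint singularities are integrable, I would obtain
\begin{equation*}
J_2\leq\frac{1}{2\sigma}\int_{\xi_1}^{\xi_2}
\left[(\xi-\xi_1)^{-2\sigma}+(\xi_2-\xi)^{-2\sigma}\right]d\xi
=\frac{1}{\sigma(1-2\sigma)}(\xi_2-\xi_1)^{1-2\sigma}.
\end{equation*}
Combining the two estimates yields $\mathcal{G}\lesssim_\sigma(\xi_2-\xi_1)^{1-2\sigma}$, and taking square roots gives \eqref{E-2-Hold}.

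This computation is in fact lighter than that of Lemma~\ref{L-E-1-Lip}, because the compact support of $E_2$ eliminates the far-field subdivisions needed there. The only genuine point to watch is that $E_2$ has nonvanishing boundary traces, $E_2(\xi_2)=1$ and $E_2(\xi_1)=e^{-(\xi_2-\xi_1)}$, so the jumps at the two endpoints are what generate the singular factors $(\xi-\xi_1)^{-2\sigma}$ and $(\xi_2-\xi)^{-2\sigma}$ in $J_2$; it is precisely their integration against the interval length that lowers the Hölder exponent from $1$ to $\frac{1}{2}-\sigma$, and the hypothesis $\sigma<\frac{1}{2}$ is exactly what keeps these integrals finite.
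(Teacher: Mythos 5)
Your proof is correct and follows essentially the same route as the paper: the same reduction via \eqref{k2s} and the Plancherel formula \eqref{Planch} to the Gagliardo-type double integral, followed by the same support-based splitting into interior and boundary contributions, with the exponent $1-2\sigma$ coming from the endpoint jumps and the hypotheses $\sigma<\tfrac12$ and $\xi_2-\xi_1\leq 1$ used in exactly the same places. The only difference is bookkeeping: the paper treats five regions ($\check I_1$ through $\check I_5$) and carries the exponential weights through explicit computations, whereas you use the $\xi\leftrightarrow\eta$ symmetry to collapse these into two terms and bound the exponentials by $1$ at the outset, which is a tidier but equivalent presentation.
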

\begin{proof}
	Arguing similarly as in Lemma \ref{L-E-1-Lip} (see \eqref{k2s}--\eqref{int-k2s-sum}), we
	obtain (here we drop the dependence of $E_2$ on the parameters $\xi_1$ and $\xi_2$)
	\begin{equation}\label{E-2-2s}
	\begin{split}
	\int_\mathbb{R}|k|^{2\sigma}
	|\mathcal{F}(E_2)(k)|^2
	\,dk\lesssim_\sigma&
	\int_{-\infty}^{\xi_1}\int_{\xi_1}^{\xi_2}
	\frac{E_2^2(\xi)}{|\xi-\eta|^{1+2\sigma}}
	\,d\xi\,d\eta
	+
	\int_{\xi_2}^{\infty}\int_{\xi_1}^{\xi_2}
	\frac{E_2^2(\xi)}{|\xi-\eta|^{1+2\sigma}}
	\,d\xi\,d\eta\\
	&+\int_{\xi_1}^{\xi_2}\int_{-\infty}^{\xi_1}
	\frac{E_2^2(\eta)}{|\xi-\eta|^{1+2\sigma}}
	\,d\xi\,d\eta
	+
	\int_{\xi_1}^{\xi_2}\int_{\xi_2}^{\infty}
	\frac{E_2^2(\eta)}{|\xi-\eta|^{1+2\sigma}}
	\,d\xi\,d\eta\\
	&+
	\int_{\xi_1}^{\xi_2}\int_{\xi_1}^{\xi_2}
	\frac{\left(E_2(\xi)-E_2(\eta)\right)^2}
	{|\xi-\eta|^{1+2\sigma}}
	\,d\xi\,d\eta
	\equiv\sum\limits_{i=1}^5\check I_{i}.
	\end{split}
	\end{equation}
	The integral $\check I_{3}$ can be estimated as follows (recall that ($1-2\sigma>0$)):
	\begin{equation}
	\nonumber
	\begin{split}
	\check I_{3}&=\frac{e^{-2\xi_2}}{2\sigma}
	\int_{\xi_1}^{\xi_2}e^{2\eta}
	(\eta-\xi_1)^{-2\sigma}\,d\eta=
	\frac{(\xi_2-\xi_1)^{1-2\sigma}}
	{2\sigma(1-2\sigma)}
	-\frac{e^{-2\xi_2}}{\sigma(1-2\sigma)}
	\int_{\xi_1}^{\xi_2}e^{2\eta}
	(\eta-\xi_1)^{1-2\sigma}\,d\eta\\
	&\leq \frac{(\xi_2-\xi_1)^{1-2\sigma}}
	{2\sigma(1-2\sigma)}.
	\end{split}
	\end{equation}
	The integral $\check I_1$ can be treated in the same way after changing the order of integration.
	For $\check I_4$ we have ($\check I_{2}$ can be analyzed similarly after, again, changing the order of integration)
	\begin{equation}
	\nonumber
	\begin{split}
	\check I_{4}&=\frac{e^{-2\xi_2}}{2\sigma}
	\int_{\xi_1}^{\xi_2}e^{2\eta}
	(\xi_2-\eta)^{-2\sigma}\,d\eta=
	\frac{e^{2(\xi_1-\xi_2)}}
	{2\sigma(1-2\sigma)}
	(\xi_2-\xi_1)^{1-2\sigma}\\
	&\quad+\frac{e^{-2\xi_2}}{\sigma(1-2\sigma)}
	\int_{\xi_1}^{\xi_2}e^{2\eta}
	(\xi_2-\eta)^{1-2\sigma}\,d\eta
	\leq \frac{(\xi_2-\xi_1)^{1-2\sigma}}
	{2\sigma(1-2\sigma)}
	+\frac{(\xi_2-\xi_1)^{2-2\sigma}}
	{\sigma(1-2\sigma)}\\
	&\lesssim_\sigma(\xi_2-\xi_1)^{1-2\sigma},
	\end{split}
	\end{equation}
	where we use that $\xi_2-\xi_1\leq1$ and
	$\int_{\xi_1}^{\xi_2}f(\eta)g(\eta)\,d\eta
	\leq(\xi_2-\xi_1)
	\max\limits_{\xi_1\leq\eta\leq\xi_2}f(\eta)\cdot
	\max\limits_{\xi_1\leq\eta\leq\xi_2}g(\eta)
	$.
	
	Finally, to estimate $\check I_5$ we, taking into account that $|e^a-e^b|\leq
	e^{\max\{a,b\}}|a-b|$, observe that
	\begin{equation}\label{ch-I-5}
	\begin{split}
	\check I_5&=e^{-2\xi_2}
	\int_{\xi_1}^{\xi_2}\int_{\xi_1}^{\eta}
	\frac{\left(e^\xi-e^\eta\right)^2}
	{(\eta-\xi)^{1+2\sigma}}\,d\xi\,d\eta
	+e^{-2\xi_2}
	\int_{\xi_1}^{\xi_2}\int_{\eta}^{\xi_2}
	\frac{\left(e^\xi-e^\eta\right)^2}
	{(\xi-\eta)^{1+2\sigma}}\,d\xi\,d\eta\\
	&\leq e^{-2\xi_2}
	\int_{\xi_1}^{\xi_2}e^{2\eta}
	\int_{\xi_1}^{\eta}
	(\eta-\xi)^{1-2\sigma}\,d\xi\,d\eta
	+e^{-2\xi_2}
	\int_{\xi_1}^{\xi_2}
	\int_{\eta}^{\xi_2}e^{2\xi}
	(\xi-\eta)^{1-2\sigma}\,d\xi\,d\eta.
	\end{split}
	\end{equation}
	The first integral in the right-hand side of \eqref{ch-I-5} has the form (the second integral can be estimated similarly after changing the order of integration)
	\begin{equation}
	\nonumber
	\int_{\xi_1}^{\xi_2}e^{2\eta}
	\int_{\xi_1}^{\eta}
	(\eta-\xi)^{1-2\sigma}\,d\xi\,d\eta
	=\frac{1}{2-2\sigma}	
	\int_{\xi_1}^{\xi_2}e^{2\eta}
	(\eta-\xi_1)^{2-2\sigma}\,d\eta
	\lesssim_\sigma e^{2\xi_2}
	(\xi_2-\xi_1)^{1-2\sigma}.
	\end{equation}
	Thus we eventually have that 
	$\check I_i\lesssim_\sigma 
	(\xi_2-\xi_1)^{1-2\sigma}$ for all $i=1,\dots,5$, which, together with \eqref{E-2-2s}, imply 
	\eqref{E-2-Hold}
\end{proof}

Now we are at the position to prove the main result of this paper, which states the H\"older continuity of functions $f\in H^{s}(\mathbb{R})$, $\frac{1}{2}<s<1$.
\begin{theorem}\cite[Section 2.8.1]{T78}\label{Hold-Sob-th}
	Suppose that $f(\xi)\in H^{s}(\mathbb{R})$ with $\frac{1}{2}<s<1$.
	Then $f(\xi)$ is locally H\"older continuous with the exponent $s-\frac{1}{2}$:
	\begin{equation}\label{Hr-Hold}
	|f(\xi_1)-f(\xi_2)|\lesssim_s
	\|f\|_{H^s(\mathbb{R})}
	|\xi_1-\xi_2|^{s-\frac{1}{2}},
	\end{equation}
	for all $\xi_1,\xi_2\in\mathbb{R}$ such that
	$|\xi_1-\xi_2|\leq 1$.
\end{theorem}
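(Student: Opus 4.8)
The plan is to recover the pointwise difference $f(\xi_1)-f(\xi_2)$ from the two auxiliary functions $E_1,E_2$ by means of the Green's function of the operator $1-\partial_\xi$, and then to split the resulting Fourier integral into the $H^s$-norm of $f$ and a weighted $L^2$-norm of $E_1-E_2$ controlled by Lemmas \ref{L-E-1-Lip} and \ref{L-E-2-Hold}. First, set $\sigma=1-s$, so that $0<\sigma<\tfrac12$ precisely when $\tfrac12<s<1$. For $\xi_0\in\mathbb{R}$ introduce the one-sided exponential $G_{\xi_0}(\xi)=e^{\xi-\xi_0}$ for $\xi\le\xi_0$ and $G_{\xi_0}(\xi)=0$ otherwise; a direct computation gives $\mathcal{F}(G_{\xi_0})(k)=\frac{1}{2\pi}\frac{e^{-\mathrm{i}k\xi_0}}{1-\mathrm{i}k}$ (equivalently, $(1-\partial_\xi)G_{\xi_0}=\delta_{\xi_0}$). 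The key algebraic observation, obtained by comparing \eqref{E-1} and \eqref{E-2} with the definition of $G$, is that
\[
E_1(\xi;\xi_1,\xi_2)-E_2(\xi;\xi_1,\xi_2)=G_{\xi_1}(\xi)-G_{\xi_2}(\xi),
\]
since on $(-\infty,\xi_1]$ the right-hand side equals $e^{\xi-\xi_1}-e^{\xi-\xi_2}=E_1$, on $[\xi_1,\xi_2]$ it equals $-e^{\xi-\xi_2}=-E_2$, and it vanishes for $\xi>\xi_2$. Consequently
\[
\mathcal{F}(E_1-E_2)(k)=\frac{1}{2\pi}\,\frac{e^{-\mathrm{i}k\xi_1}-e^{-\mathrm{i}k\xi_2}}{1-\mathrm{i}k}.
\]

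Next I would represent the difference through the Fourier inversion formula. For $f\in H^s(\mathbb{R})$ with $s>\tfrac12$ one has $\mathcal{F}(f)\in L^1(\mathbb{R})$ (by Cauchy--Schwarz, as $(1+|k|^2)^{-s}$ is integrable for $2s>1$), so the continuous representative of $f$ satisfies $f(\xi_j)=\int_{\mathbb{R}}e^{\mathrm{i}k\xi_j}\mathcal{F}(f)(k)\,dk$. Substituting $k\mapsto-k$ and inserting the formula for $\mathcal{F}(E_1-E_2)$ yields
\[
f(\xi_1)-f(\xi_2)=2\pi\int_{\mathbb{R}}(1-\mathrm{i}k)\,\mathcal{F}(E_1-E_2)(k)\,\mathcal{F}(f)(-k)\,dk,
\]
and hence, since $|1-\mathrm{i}k|=(1+k^2)^{1/2}$,
\[
|f(\xi_1)-f(\xi_2)|\le 2\pi\int_{\mathbb{R}}(1+k^2)^{1/2}\,|\mathcal{F}(E_1-E_2)(k)|\,|\mathcal{F}(f)(-k)|\,dk.
\]

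I would then split the weight as $(1+k^2)^{1/2}=(1+k^2)^{\sigma/2}(1+k^2)^{s/2}$ and apply Cauchy--Schwarz. The factor carrying $(1+k^2)^{s/2}$ reproduces $\|f\|_{H^s(\mathbb{R})}$ up to the constant $(2\pi)^{-1/2}$, so it remains to bound $\int_{\mathbb{R}}(1+k^2)^{\sigma}|\mathcal{F}(E_1-E_2)(k)|^2\,dk$. Using $(1+k^2)^{\sigma}\le 1+|k|^{2\sigma}$ for $0<\sigma<1$ together with the triangle inequality in the weighted $L^2$ spaces, this integral is controlled by $\|E_1\|_{L^2}^2+\|E_2\|_{L^2}^2$ plus $\int|k|^{2\sigma}|\mathcal{F}(E_1)|^2\,dk$ and $\int|k|^{2\sigma}|\mathcal{F}(E_2)|^2\,dk$. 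The last two are exactly the quantities estimated in Lemmas \ref{L-E-1-Lip} and \ref{L-E-2-Hold}, giving $\lesssim_\sigma(\xi_2-\xi_1)^2$ and $\lesssim_\sigma(\xi_2-\xi_1)^{1-2\sigma}$; the $L^2$ norms are elementary, $\|E_1\|_{L^2}^2\lesssim(\xi_2-\xi_1)^2$ and $\|E_2\|_{L^2}^2\lesssim(\xi_2-\xi_1)$. Since $|\xi_1-\xi_2|\le1$ and $0<\sigma<\tfrac12$, each of these terms is $\lesssim_\sigma(\xi_2-\xi_1)^{1-2\sigma}$ (because $t\le t^{1-2\sigma}$ and $t^2\le t^{1-2\sigma}$ for $0<t\le1$). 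Taking the square root and recalling $1-2\sigma=2s-1$ produces the factor $(\xi_2-\xi_1)^{s-1/2}$ and hence \eqref{Hr-Hold}.

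The main obstacle is the first step: recognizing that the combination $E_1-E_2$ of the two lemma functions is precisely $G_{\xi_1}-G_{\xi_2}$, where $G$ is the Green's function of $1-\partial_\xi$, so that multiplication by $(1-\mathrm{i}k)$ converts $\mathcal{F}(E_1-E_2)$ into the clean kernel $e^{-\mathrm{i}k\xi_1}-e^{-\mathrm{i}k\xi_2}$ representing the pointwise difference. This is what makes the weight $(1+k^2)^{1/2}$ appear, whose balanced split $\tfrac{1-s}{2}+\tfrac{s}{2}$ simultaneously feeds the $H^s$-norm of $f$ and places the leftover weight exponent $\sigma=1-s$ in the range $(0,\tfrac12)$ where Lemmas \ref{L-E-1-Lip} and \ref{L-E-2-Hold} apply. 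Once this identity is secured, the remaining steps are a routine Cauchy--Schwarz estimate and bookkeeping under the constraint $|\xi_1-\xi_2|\le1$.
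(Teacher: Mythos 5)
Your proof is correct, and at its core it coincides with the paper's: the same decomposition of the kernel into $E_1,E_2$ from \eqref{E-1}--\eqref{E-2}, the same choice $\sigma=1-s$, and the same appeal to Lemmas \ref{L-E-1-Lip} and \ref{L-E-2-Hold} through a Cauchy--Schwarz estimate on the Fourier side. What differs is the reduction to those lemmas, and your version is leaner in two respects. The paper works in physical space: it represents $f(\xi_1)$ by the product-rule identity \eqref{f-repr}, which splits $f(\xi_1)-f(\xi_2)$ into zeroth-order terms in $f$, estimated separately with the $L^\infty$ embedding \eqref{Sob-emb}, plus the first-order terms $I=\int E_1f'\,d\xi-\int E_2f'\,d\xi$ of \eqref{I-1-I-2}, each paired against $\|f'\|_{H^{s-1}}$ via Plancherel; because $f'$ appears, the argument is first run for $f\in\mathcal{S}(\mathbb{R})$ and then completed by density of $\mathcal{S}(\mathbb{R})$ in $H^s(\mathbb{R})$. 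You instead exploit the identity $E_1-E_2=G_{\xi_1}-G_{\xi_2}$, with $G_{\xi_0}$ the Green's function of $1-\partial_\xi$, so that $2\pi(1-\mathrm{i}k)\mathcal{F}(E_1-E_2)(k)=e^{-\mathrm{i}k\xi_1}-e^{-\mathrm{i}k\xi_2}$, and Fourier inversion expresses $f(\xi_1)-f(\xi_2)$ as a single pairing; one Cauchy--Schwarz with the weight split $(1+k^2)^{1/2}=(1+k^2)^{(1-s)/2}(1+k^2)^{s/2}$ then does everything at once. This buys you (i) no separate use of the $L^\infty$ Sobolev embedding, since the zeroth-order contribution reappears as the harmless $L^2$ norms of $E_1,E_2$ through $(1+k^2)^\sigma\le1+|k|^{2\sigma}$, and (ii) no density argument at all, since the inversion formula is valid for the continuous representative of any $f\in H^s(\mathbb{R})$ with $s>\frac{1}{2}$, as $\mathcal{F}(f)\in L^1(\mathbb{R})$ by Cauchy--Schwarz. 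The only point worth making explicit is that Lemma \ref{L-E-2-Hold} is invoked under its hypothesis $0\le\xi_2-\xi_1\le1$ (misprinted as $\xi_2-\xi_2$ in \eqref{E-2}), which your standing normalization $\xi_1\le\xi_2$, $|\xi_1-\xi_2|\le1$ supplies.
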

\begin{proof}
	Without loss of generality we can take 
	$0\leq\xi_2-\xi_1\leq1$.
	Consider a function $f\in\mathcal{S}(\mathbb{R})$.
	Then for any $\xi_1\in\mathbb{R}$ we have
	\begin{equation}\label{f-repr}
	f(\xi_1)=\int_{-\infty}^{\xi_1}
	\frac{d}{d\xi}\left[
	e^{\xi-\xi_1}f(\xi)
	\right]\,d\xi
	=\int_{-\infty}^{\xi_1}
	e^{\xi-\xi_1}f(\xi)\,d\xi
	+\int_{-\infty}^{\xi_1}
	e^{\xi-\xi_1}f^\prime(\xi)\,d\xi.
	\end{equation}
	Let 
	\begin{equation}
	\nonumber
	I=	\int_{-\infty}^{\xi_1}e^{\xi-\xi_1}
	f^\prime(\xi)\,d\xi
	-\int_{-\infty}^{\xi_2}e^{\xi-\xi_2}
	f^\prime(\xi)\,d\xi.
	\end{equation}
	Then using \eqref{f-repr}, the Sobolev embedding theorem \eqref{Sob-emb} and that $|e^a-e^b|\leq
	e^{\max\{a,b\}}|a-b|$,
	we obtain 
	\begin{equation}\label{f-diff}
	\begin{split}
	\left|f(\xi_1)-f(\xi_2)\right|
	&\leq
	\left|
	\int_{-\infty}^{\xi_1}
	e^\xi(e^{-\xi_1}-e^{-\xi_2})f(\xi)\,d\xi
	\right|
	+\left|
	\int_{\xi_1}^{\xi_2}e^{\xi-\xi_2}f(\xi)\,d\xi
	\right|
	+|I|\\
	&\lesssim_s \|f\|_{H^s}e^{-\xi_1}(\xi_2-\xi_1)
	\int_{-\infty}^{\xi_1} e^\xi\,d\xi
	+\|f\|_{H^s}e^{-\xi_2}
	\int_{\xi_1}^{\xi_2}e^\xi\,d\xi
	+|I|
	\\
	&\lesssim_s\|f\|_{H^s}(\xi_2-\xi_1)+|I|,
	\end{split}
	\end{equation}
	To prove that $|I|$ is H\"older continuous with the exponent $s-\frac{1}{2}$, we write $I$ in the form
	\begin{equation}\label{I-1-I-2}
	I=\int_{-\infty}^{\infty}
	E_1(\xi)f^\prime(\xi)\,d\xi
	-\int_{-\infty}^{\infty}E_2(\xi)
	f^\prime(\xi)\,d\xi\equiv I_1+I_2,
	\end{equation}
	with $E_1(\xi)=E_1(\xi;\xi_1,\xi_2)$ and
	$E_2(\xi)=E_2(\xi;\xi_1,\xi_2)$ given by \eqref{E-1} and \eqref{E-2} respectively.
	
	Let us estimate the integral $I_1$.
	By the Cauchy-Schwarz inequality and the Plancherel identity \eqref{Planch} we have
	\begin{equation}\label{|I_1|}
	\begin{split}
	|I_1|&\leq2\pi\|f^\prime\|_{H^{s-1}}
	\left(\int_{-\infty}^{\infty}
	(1+k^2)^{1-s}|\mathcal{F}(E_1)(k)|^2\,dk
	\right)^{\frac{1}{2}}\\
	&\lesssim_s\|f\|_{H^s}\|E_1\|_{L^2}
	+\|f\|_{H^s}\left(\int_{-\infty}^{\infty}
	|k|^{2-2s}|\mathcal{F}(E_1)(k)|^2\,dk
	\right)^{\frac{1}{2}}.
	\end{split}
	\end{equation}
	The $L^2$ norm of $E_1$ clearly satisfies the Lipschitz property:
	\begin{equation}
	\nonumber
	\|E_1\|_{L^2}=
	\left(e^{-\xi_1}-e^{-\xi_2}\right)
	\left(
	\int_{-\infty}^{\xi_1}e^{2\xi}\,d\xi
	\right)^{\frac{1}{2}}
	\leq\frac{1}{\sqrt{2}}(\xi_2-\xi_1),
	\end{equation}
	which, together with Lemma \ref{L-E-1-Lip} with $\sigma=1-s$,
	imply that
	\begin{equation}\label{I-1-Hold} |I_1|\lesssim_s\|f\|_{H^s}(\xi_2-\xi_1).
	\end{equation}
	
	Arguing similarly as in \eqref{|I_1|} and using Lemma \ref{L-E-2-Hold} with $\sigma=1-s$, we obtain the following estimate for $|I_2|$:
	\begin{equation}\label{I-2-Hold}
	\begin{split}
	|I_2|&\lesssim_s\|f\|_{H^s}
	\left(\int_{\xi_1}^{\xi_2}e^{2\xi-2\xi_2}
	\,d\xi\right)^{\frac{1}{2}}
	+\|f\|_{H^s}
	\left(\int_{-\infty}^{\infty}
	|k|^{2-2s}|\mathcal{F}(E_2)(k)|^2\,dk
	\right)^{\frac{1}{2}}\\
	&\lesssim_s\|f\|_{H^s}
	\left((\xi_2-\xi_1)^{\frac{1}{2}}
	+(\xi_2-\xi_1)^{s-\frac{1}{2}}
	\right)
	\lesssim_s\|f\|_{H^s}
	(\xi_2-\xi_1)^{s-\frac{1}{2}}.
	\end{split}
	\end{equation}
	Combining \eqref{f-diff}, \eqref{I-1-I-2}, \eqref{I-1-Hold} and \eqref{I-2-Hold}, we obtain \eqref{Hr-Hold} for $f\in\mathcal{S}(\mathbb{R})$.
	
	To prove \eqref{Hr-Hold} for $f\in H^s(\mathbb{R})$, we approximate $f$ by a sequence of Schwartz functions $f_n\in\mathcal{S}$, $n\in\mathbb{N}$ (here we use the fact that the Schwartz space $\mathcal{S}(\mathbb{R})$ is dense in $H^s(\mathbb{R})$, see \cite[Theorem 7.38]{A75}).
	The Sobolev embedding \ref{Sob-emb} implies that $f_n(\xi)\to f(\xi)$ for all $\xi\in\mathbb{R}$ and we can take a limit $n\to\infty$ in the inequality
	$$
	|f_n(\xi_1)-f_n(\xi_2)|
	\lesssim_s\|f_n\|_{H^s(\mathbb{R})}
	|\xi_1-\xi_2|^{s-\frac{1}{2}},
	$$
	for all $\xi_1,\xi_2\in\mathbb{R}$ such that
	$|\xi_1-\xi_2|\leq 1$.
\end{proof}
\begin{corollary}\label{cor-1}
	Theorem \ref{Hold-Sob-th} implies inequality \eqref{Hold-Sob}.
	\begin{proof}
		Indeed, from the definition of the H\"older norm (see \eqref{Hold-norm}) the Sobolev embedding theorem \eqref{Sob-emb} and \eqref{Hr-Hold}, we have for any 
		$\frac{1}{2}<s<1$
		\begin{equation}
		\nonumber
		\begin{split}
		\|f\|_{C^{0,s-1/2}}&=
		\|f\|_{L^\infty}
		+\sup\limits_{0<|\xi_1-\xi_2|\leq1}\left\{
		\frac{|f(\xi_1)-f(\xi_2)|}
		{|\xi_1-\xi_2|^{s-\frac{1}{2}}}
		\right\}+
		\sup\limits_{|\xi_1-\xi_2|>1}\left\{
		\frac{|f(\xi_1)-f(\xi_2)|}
		{|\xi_1-\xi_2|^{s-\frac{1}{2}}}
		\right\}\\
		& \lesssim_s\|f\|_{H^s}.
		\end{split}
		\end{equation}
	\end{proof}
\end{corollary}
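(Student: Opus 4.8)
The plan is to bound the two pieces of the H\"older norm \eqref{Hold-norm} separately: the $L^\infty$ part directly by the Sobolev embedding, and the H\"older seminorm by splitting the supremum according to whether the two points are close together or far apart. First I would recall that \eqref{Sob-emb} already supplies $\|f\|_{L^\infty}\lesssim_s\|f\|_{H^s}$, so the only remaining task is to control the seminorm $\sup_{\xi_1\neq\xi_2}|f(\xi_1)-f(\xi_2)|/|\xi_1-\xi_2|^{s-1/2}$.

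For the seminorm I would decompose the supremum over all pairs $\xi_1\neq\xi_2$ into two suprema, one over pairs with $0<|\xi_1-\xi_2|\leq1$ and one over pairs with $|\xi_1-\xi_2|>1$. On the near range the estimate \eqref{Hr-Hold} of Theorem \ref{Hold-Sob-th} applies verbatim and yields $|f(\xi_1)-f(\xi_2)|/|\xi_1-\xi_2|^{s-1/2}\lesssim_s\|f\|_{H^s}$ uniformly in such pairs.

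The only point requiring a little care is the far range $|\xi_1-\xi_2|>1$, where Theorem \ref{Hold-Sob-th} does not apply. Here I would use that $s-\tfrac12>0$, so that $|\xi_1-\xi_2|^{s-1/2}>1$ and the denominator only helps: bounding the numerator by the triangle inequality as $|f(\xi_1)-f(\xi_2)|\leq2\|f\|_{L^\infty}$ gives $|f(\xi_1)-f(\xi_2)|/|\xi_1-\xi_2|^{s-1/2}\leq2\|f\|_{L^\infty}\lesssim_s\|f\|_{H^s}$, again by \eqref{Sob-emb}. Adding the three contributions---the $L^\infty$ term and the two suprema---then produces $\|f\|_{C^{0,s-1/2}}\lesssim_s\|f\|_{H^s}$, which is \eqref{Hold-Sob}. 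There is no serious obstacle in this deduction; the entire content is the case split together with the observation that large separations are controlled by the $L^\infty$ bound rather than by the local H\"older estimate \eqref{Hr-Hold}.
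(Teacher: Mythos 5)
Your proposal is correct and follows essentially the same route as the paper's own proof: splitting the H\"older norm into the $L^\infty$ part, the supremum over pairs with $0<|\xi_1-\xi_2|\leq1$ (handled by \eqref{Hr-Hold}), and the supremum over pairs with $|\xi_1-\xi_2|>1$ (handled by $2\|f\|_{L^\infty}$ and \eqref{Sob-emb}, since $s-\tfrac12>0$ makes the denominator exceed $1$). In fact you spell out explicitly the far-range step that the paper leaves implicit in its one-display argument.
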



\begin{thebibliography}{99}
	\bibitem{A75}
	R.A. Adams.
	\newblock Sobolev Spaces.
	\newblock Academic Press, New York, 1975.
	
	\bibitem{A55}
	N. Aronszajn.
	\newblock Boundary values of functions with finite Dirichlet integral.
	\newblock {\em Tech. Report of Univ. of Kansas}, 14:77--94, (1955).
	
	\bibitem{DNPV12}
	E. Di Nezza, G. Palatucci and E. Valdinoci.
	\newblock Hitchhiker’s guide to the fractional Sobolev spaces.
	\newblock {\em Bull. Sci. Math.}, 136:521--573, 2012.
	
	\bibitem{G58}
	E. Gagliardo. 
	\newblock Propriet\`a di alcune classi di funzioni in pi\`u variabili.
	\newblock {\em Ric. Mat.}, 7:102--137 (1958).
	
	\bibitem{S58}
	L.N. Slobodeckij.
	\newblock Generalized Sobolev spaces and their applications to boundary value problems of partial differential equations.
	\newblock {\em Leningrad. Gos. Ped. Inst. 
	U$\check c$ep. Zap.}, 197:54--112, (1958).
	
	\bibitem{T06}
	T. Tao,
	\newblock Nonlinear Dispersive Equations: Local and Global Analysis.
	\newblock CBMS Regional Conference Series in Mathematics, 2006.
	
	\bibitem{T78}
	H. Triebel. 
	\newblock Interpolation Theory, Function Spaces, Differential Operators.
	\newblock Amsterdam, North-Holland Pub. Co., 1978.
	
\end{thebibliography}
\end{document}